\documentclass[letterpaper,11pt]{article}

\usepackage{amsmath,amsfonts,amssymb,amsthm,mathrsfs,dsfont}
\usepackage{cases}
\usepackage{enumerate}
\usepackage[usenames,dvipsnames]{color}
\usepackage{verbatim} 
\usepackage{array} 
\usepackage{graphicx,epstopdf} 
\usepackage{mathtools}


\newcommand{\E}[0]{\mathbb{E}}

\newtheorem{thm}{Theorem}[section]

\newtheorem{lemma}[thm]{Lemma}

\newcommand{\beq}[1]{\begin{equation}\label{#1}}
\newcommand{\enq}[0]{\end{equation}}

\newcommand{\bn}[0]{\bigskip\noindent}
\newcommand{\mn}[0]{\medskip\noindent}
\newcommand{\nin}[0]{\noindent}

\newcommand{\sub}[0]{\subseteq}
\newcommand{\sm}[0]{\setminus}

\newcommand{\eee}[0]{{\cal E}}

\newcommand{\h}[0]{{\cal H}}

\newcommand{\ra}[0]{\rightarrow}

\newcommand{\cc}[0]{\gamma}

\newcommand{\0}[0]{\emptyset}

\newcommand{\C}[2]{{{#1}\choose{{#2}}}}
\newcommand{\Cc}[0]{\tbinom}
\newcommand{\ga}[0]{\alpha }
\newcommand{\gb}[0]{\beta }

\newcommand{\gd}[0]{\delta }

\newcommand{\gl}[0]{\lambda }
\newcommand{\gL}[0]{\Lambda}
\newcommand{\go}[0]{\omega}
\newcommand{\gO}[0]{\Omega}

\newcommand{\gs}[0]{\sigma}

\newcommand{\gz}[0]{\zeta}
\newcommand{\eps}[0]{\varepsilon }

\newcommand{\vp}[0]{\varphi}

\newcommand{\ft}[0]{s}
\newcommand{\fP}[0]{f}

\newcommand{\comments}[1]{}
\usepackage[usenames]{color}

\newcommand{\pr}[1][]{\mathbb{P}}

\newcommand{\Ee}{\mathcal{E}}

\newcommand{\Aitch}{\mathcal{H}}

\newcommand{\MPOut}[3]{\kout{\mpart{#1}{#3}}{#2}}
\newcommand{\kout}[2]{{#1}(#2\text{-out})}
\newcommand{\complete}[2]{K_{#1} ^{(#2)}}
\newcommand{\mpart}[2]{K_{[{#1}] ^{#2}}}

\newcommand{\lcho}[2]{{#1 \choose #2}}

\newcommand{\1}{\textbf{1}}
\newcommand{\ignore}[1]{}

\newcommand{\dd}[1]{\,\textit{d}#1}

\newtheorem{theorem}{Theorem}
\newtheorem{proposition}[theorem]{Proposition}
\newtheorem{corollary}[theorem]{Corollary}

\newtheorem{conjecture}{Conjecture}

\bibliographystyle{siam}

\begin{document}
\renewcommand{\thefootnote}{\fnsymbol{footnote}}
\footnotetext{AMS 2010 subject classification:  05C65, 05C70, 05D40, 05C80, 90C05, 90C32}
\footnotetext{Key words and phrases: random hypergraphs, perfect fractional matchings, k-out model, hypergraph expansion}

\title{Perfect fractional matchings in $k$-out hypergraphs\footnote{Supported by NSF grant DMS1501962.}}
\author{Pat Devlin\footnote{Department of Mathematics, Rutgers University, Piscataway, NJ 08854 (\texttt{prd41@math.rutgers.edu}, \texttt{jkahn@math.rutgers.edu})}, Jeff Kahn\footnotemark[2]}
\date{March 7, 2017}

\maketitle
\renewcommand*{\thefootnote}{\arabic{footnote}}

\begin{abstract}
Extending the notion of (random) $k$-out graphs, we consider when the $k$-out hypergraph is likely to have a
perfect fractional matching.  In particular, we show that for each $r$ there is a $k=k(r)$
such that the $k$-out $r$-uniform hypergraph on $n$ vertices
has a perfect fractional matching with high
 probability (i.e., with probability tending to $1$ as $n\ra\infty$)
 and prove an analogous result for $r$-uniform $r$-partite hypergraphs.  This is based on
 a new notion of hypergraph expansion and the observation that sufficiently expansive hypergraphs
 admit perfect fractional matchings.  As a further application, we give
 a short proof of a stopping-time result originally due to Krivelevich.
\end{abstract}

\section{Introduction}
Hypergraphs constitute a far-reaching
generalization of graphs and a basic combinatorial construct but are notoriously difficult to work with.
A \textit{hypergraph}
is a collection $\Aitch$ of subsets
(\emph{``edges"}) of a set $V$ of \emph{``vertices."}
Such an $\Aitch$ is $r$-\emph{uniform} (or an \emph{r-graph}) if
each edge has cardinality $r$ (so $2$-graphs are graphs).
A \textit{perfect matching} in a hypergraph is a collection of edges partitioning the vertex set.
For any $r > 2$, deciding whether an $r$-graph has a perfect matching is an NP-complete problem \cite{karp};
so instances of the problem tend to be both interesting and difficult.
Of particular interest here has been trying to understand conditions under which a \emph{random} hypergraph
is likely to have a perfect matching.

\mn

The most natural model of a random $r$-graph is the ``Erd\H{o}s-R\'enyi" model, in which
each $r$-set is included in $\Aitch$ with probability $p$, independent of other choices.
One is then interested in the ``threshold," roughly, the order of magnitude of $p=p_r(n)$ required
to make a perfect matching likely.
Here the graph case was settled by Erd\H{o}s and R\'enyi
\cite{erdos1964random, erd1966}, but for $r > 2$ the problem---which became known as
Shamir's Problem following \cite{erdos1981combinatorial}---remained open until \cite{johansson2008factors}.
In each case, the obvious
obstruction to containing a perfect matching is existence of an isolated vertex (that is,
a vertex contained in no edges), and a natural guess is that this is the \emph{main} obstruction.
A literal form of this assertion---the \emph{stopping time} version---says that if we choose random
edges \emph{sequentially}, each uniform from those as yet unchosen, then we w.h.p.\footnote{As usual we use \emph{with high probability (w.h.p.)}\ to
mean with probability tending to 1 as the relevant parameter---here always $n$---tends to infinity.}\
have a perfect matching as soon as all vertices are covered.
This nice behavior does hold for graphs \cite{bollobas1985random}, but for hypergraphs remains conjectural
(though at least the value it suggests for the threshold is correct).

\mn

An interesting point here is that taking $p$ large enough to avoid isolated vertices produces many more edges
than other considerations---e.g., wanting a large \emph{expected} number of perfect matchings---suggest.
This has been one motivation for the substantial body of work on models of
random graphs in which isolated vertices are automatically avoided, notably
random \emph{regular} graphs (e.g., \cite{wormald1999models})
and the $k$-out model.
The generalization of the latter to hypergraphs, which we now introduce,
will be our main focus here.

\begin{quote}
\textbf{The $k$-out model.}
For a (``host") hypergraph $\h$ on $V$,
$\kout{\h}{k}$ is the random subhypergraph $\cup_{v\in V}E_v$, where
$E_v$ is chosen uniformly from the $k$-subsets of $\h_v:=\{A\in \h:v\in A\}$
(or---but we won't see this---$E_v =\h_v$ if $|\h_v|<k$),
these choices made independently.
\end{quote}

\mn

The $k$-out model for $\h=K_{n,n}$ (the complete bipartite graph) was introduced by
Walkup \cite{walkup1980matchings}, who
showed that w.h.p.\ $\kout{K_{n,n}}{2}$ is
Hamiltonian, so in particular contains a perfect matching, and Frieze \cite{frieze1986maximum}
proved the nonbipartite counterpart of the matching result,
showing that $\kout{K_{2n}}{2}$ has a perfect matching w.h.p.\  (Hamiltonicity in the latter case
turned out to be more challenging; it
was studied in \cite{fenner1984hamiltonian, frieze1987hamiltonian, cooper1994hamilton}
and finally resolved by Bohman and Frieze \cite{bohman2009hamiltonian}, who proved $\kout{K_n}{3}$ is
Hamiltonian w.h.p.).  The idea of a general host $G$ was introduced by Frieze and T.~Johansson
\cite{frieze2014kOut}; see also e.g.,
Ferber \emph{et al.} \cite{ferber2014packing} for (\emph{inter alia}) a nice connection with $G_{n,p}$.

For \emph{hypergraphs} the $k$-out model seems not to have been studied previously (random regular hypergraphs \emph{have} been considered,
e.g., in \cite{cooper1996perfect}).
Here the two most important examples would seem to be
$\Aitch = \complete{n}{r}$ (the complete $r$-graph on $n$ vertices) and $\Aitch = \mpart{n}{r}$ (the complete $r$-partite $r$-graph with $n$ vertices in each part).
It is natural to expect that for each of these
there is some $k = k(r)$ for which $\kout{\Aitch}{k}$ has a perfect matching w.h.p..
Note that, while almost certainly correct, these are likely to be difficult, as
either would
imply the aforementioned resolution of Shamir's Problem;
still, we would like to regard the following
linear relaxations as a small step in this direction.
(Relevant definitions are recalled in Section \ref{sectionPrelims}.)

\begin{theorem}\label{generalHypergraph}
For each $r$, there is a k such that w.h.p.\
$\kout{\complete{n}{r}}{k}$ admits a perfect fractional matching
and $w\equiv 1/r$ is the only fractional cover of weight $n/r$.
\end{theorem}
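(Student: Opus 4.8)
\mn
\textit{Proof proposal.} The plan is to split the statement into a deterministic fact about expansive $r$-graphs and a probabilistic estimate on the $k$-out model, with the latter carrying the real work. Begin with linear-programming duality. For an $r$-graph $\h$ on $n$ vertices a perfect fractional matching is a $\mu\colon\h\to[0,1]$ with $\sum_{A\ni v}\mu(A)=1$ for every vertex $v$, and a fractional cover is a $w\colon V\to\mathbb{R}_{\ge0}$ with $\sum_{v\in A}w(v)\ge1$ for every edge $A$; duality says a perfect fractional matching exists precisely when the minimum weight $\sum_v w(v)$ of a fractional cover equals $n/r$. The constant function $w\equiv1/r$ is always a cover of weight $n/r$, hence an optimal one exactly when a perfect fractional matching exists, so the assertion of the theorem is that it is then the \emph{unique} optimal cover. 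Writing a cover as $w(v)=1/r+x(v)$ turns the cover constraints into $\sum_{v\in A}x(v)\ge0$ for all edges $A$ (together with $x(v)\ge-1/r$) and its weight into $n/r+\sum_v x(v)$; so it suffices to prove that w.h.p.\ the only $x$ with $\sum_{v\in A}x(v)\ge0$ for every edge $A$ and $\sum_v x(v)=0$ is $x\equiv0$.

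This is where expansion enters. Suppose $w=1/r+x$ is a cover of weight $n/r$, so $\sum_v x(v)=0$. If $\mu$ is any perfect fractional matching then
\[
0=\sum_v x(v)=\sum_v x(v)\sum_{A\ni v}\mu(A)=\sum_{A\in\h}\mu(A)\sum_{v\in A}x(v),
\]
a sum of nonnegative terms, so $\sum_{v\in A}x(v)=0$ for every $A$ with $\mu(A)>0$, hence for every edge lying in \emph{some} perfect fractional matching. Writing $\h^{\star}$ for the set of such edges, $x$ is orthogonal to every incidence vector $\mathbf{1}_A$, $A\in\h^{\star}$; consequently, if those vectors span $\mathbb{R}^V$ then $x\equiv0$. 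I would therefore isolate as a lemma — the sharpened form of the principle that expansive hypergraphs have perfect fractional matchings — the statement that a suitably expansive $r$-graph carries enough perfect fractional matchings that $\{\mathbf{1}_A:A\in\h^{\star}\}$ spans $\mathbb{R}^V$ (in particular a perfect fractional matching exists at all). Granting this, Theorem~\ref{generalHypergraph} reduces to showing that $\kout{\complete{n}{r}}{k}$ is expansive w.h.p.

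The probabilistic verification is where I expect the main obstacle to be. One fixes an absolute constant $k=k(r)$ and bounds, by a union bound over all pertinent vertex sets $S$, the probability that $\kout{\complete{n}{r}}{k}$ fails the expansion inequality on $S$: a set $S$ with $|S|=s$ collects about $ks$ edges from the seeds of its own vertices, plus a binomially distributed number of edges seeded outside $S$ that land inside $S$, and the inequality must hold for all $S$ simultaneously. The delicate range is small $s$ — constant, or slowly growing with $n$ — where the $\binom{n}{s}$ choices of $S$ must be defeated by the failure probability; this is what forces $k$ to grow with $r$ and carries the technical weight, while $s=\Theta(n)$ is comfortable. Assembling the reformulation, the expansion lemma, and this estimate yields the theorem; and the same scheme with $\mpart{n}{r}$ in place of $\complete{n}{r}$ should give the $r$-partite analogue mentioned in the abstract.
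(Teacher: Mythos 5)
Your high-level architecture is the same as the paper's: a deterministic lemma saying that sufficiently expansive $r$-graphs have a perfect fractional matching with $w\equiv 1/r$ the unique minimum cover, plus a union-bound verification that $\kout{\complete{n}{r}}{k}$ expands w.h.p. But as written there is a genuine gap in each half, and the deterministic half is the more serious one. Your reduction of uniqueness to the claim that $\{\mathbf{1}_A : A\in\h^{\star}\}$ spans $\mathbb{R}^V$ (where $\h^{\star}$ is the set of edges in the support of some perfect fractional matching) is a correct implication, but you neither prove that a perfect fractional matching exists nor that this spanning holds, and the spanning statement is not obviously easier than what it replaces --- it would require exhibiting a rich enough family of perfect fractional matchings in a sparse random hypergraph, which is essentially the hard part of the theorem restated. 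The paper's Proposition~\ref{mainLemma} avoids this entirely: it takes an arbitrary fractional cover $w$ with $\min_v w(v)<1/r$, observes that the level sets $W_t=\{v: w(v)\le 1/r-t\}$ are independent and that every edge meeting $W_t$ must meet $W^{t/(r-1)}=\{v: w(v)\ge 1/r+t/(r-1)\}$, invokes expansion to get $|W^{t/(r-1)}|>(r-1)|W_t|$, and then integrates $|\{v:w(v)\ge s\}|$ over $s$ to conclude $|w|>n/r$. This gives existence and uniqueness in one stroke, with no reference to the matching side of the LP. Note also that the precise expansion hypothesis matters: the paper needs it only for \emph{independent} $X$ against $Y$ with $|Y|\le(r-1)|X|$, and that restriction is exactly what makes both the level-set argument and the probabilistic verification close.

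On the probabilistic side you describe the shape of a union bound but do not specify the expansion condition being verified or carry out any estimate; your remark that small $|S|$ is the delicate range does not match how the paper's computation actually goes. The paper splits the verification into two first-moment calculations: Lemma~\ref{outInd} shows $\alpha(\h)<cn$ w.h.p.\ (so the independence restriction caps $|X|$ at $cn$), and Lemma~\ref{outExpand} shows every $X$ with $|X|\le cn$ is $(r-1)$-expansive w.h.p., where for fixed disjoint $X,Y$ the probability that every edge meeting $X$ also meets $Y$ is at most $\bigl[r(|Y|+r)/n\bigr]^{k|X|}$ and the sum over $x=|X|\le cn$ converges comfortably for $k=(2r^2)^r$, including at $x=O(1)$. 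To complete your proposal you would need to either prove your spanning lemma or replace it with an argument like the paper's, and then actually execute the union bound against a concretely stated expansion property.
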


\begin{theorem}\label{r-partiteHypergraph}
For each $r$, there is a k such that w.h.p.\
$\h=\kout{\mpart{n}{r}}{k}$
admits a perfect fractional matching and each minimum weight
fractional cover of $\h$ is constant on each block of the r-partition.
\end{theorem}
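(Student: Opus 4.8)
The plan is to derive both halves of Theorem~\ref{r-partiteHypergraph} from a single deterministic fact --- ``sufficiently expansive $r$-partite $r$-graphs have only block-constant minimum fractional covers'' --- together with the statement that $\kout{\mpart{n}{r}}{k}$ is, w.h.p., that expansive once $k$ is large enough in terms of $r$. This mirrors the route for $\complete{n}{r}$ in Theorem~\ref{generalHypergraph}, the difference being that the single ``global'' symmetry there is replaced here by one symmetry per block.

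\emph{Step 1 (LP duality).} Write $V=V_1\cup\cdots\cup V_r$ for the $r$-partition. Every edge of $\h$ meets each $V_i$ exactly once, so any weighting constant $\equiv a_i$ on $V_i$ with $\sum_i a_i=1$ is a fractional cover of weight $n$; in particular the fractional covering number $\tau^*(\h)\le n$. Conversely, if $w$ is any fractional cover with $\sum_v w_v\le n$, then with $a_i:=\min_{v\in V_i}w_v$ we get $n\ge\sum_v w_v\ge n\sum_i a_i$, so $\sum_i a_i\le1$, and $\sum_i a_i=1$ forces $w\equiv a_i$ on each $V_i$. Therefore it suffices to show that w.h.p.\ $\h$ has no fractional cover of weight $\le n$ that fails to be constant on some block. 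Granting this, the covers of weight $\le n$ are exactly the block-constant ones, which (being covers) satisfy $\sum_i a_i\ge1$ and hence have weight exactly $n$; so $\tau^*(\h)=n$, and by LP duality $\h$ has a fractional matching of weight $n$, necessarily perfect, while every minimum cover is block-constant.

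\emph{Step 2 (a bad cover yields a forbidden local configuration).} Suppose $w$ is a fractional cover of weight $\le n$ that is not constant on some block, say $V_1$. For a suitable choice of thresholds $t_1,\ldots,t_r\ge0$ with $\sum_i t_i\le1$, put $S=\{v\in V_1:w_v<t_1\}$ and $T_j=\{v\in V_j:w_v<t_j\}$ for $j\ge2$. Any edge in $S\times T_2\times\cdots\times T_r$ would have $w$-weight $<\sum_i t_i\le1$, contradicting that $w$ is a cover; so $\h$ contains no edge through $S$ that lands in $T_2\times\cdots\times T_r$. The weight bound $\sum_v w_v\le n$, Markov-type estimates on the level sets of $w$, and the assumption that $w$ is a genuine (non-block-constant) cover together confine the sizes $|S|,|T_2|,\ldots,|T_r|$ to a range in which the expansion property of Section~\ref{sectionPrelims} guarantees that some $\h$-edge through $S$ \emph{must} meet $T_2\times\cdots\times T_r$ --- a contradiction, so no such $w$ exists. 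The bookkeeping here (choosing the $t_i$ and extracting the size constraints) is the $r$-partite analogue of the corresponding argument for $\complete{n}{r}$.

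\emph{Step 3 (expansion of $\kout{\mpart{n}{r}}{k}$, and the main obstacle).} Fix $k=k(r)$ large. For a candidate non-expanding configuration --- a block $i$, a set $S\subseteq V_i$, and sets $T_j\subseteq V_j$ ($j\ne i$) in the pertinent size ranges --- the event that no $\h$-edge through $S$ lands in $\prod_{j\ne i}T_j$ factors over the independent choices $E_v$, $v\in S$: each such $v$ has all $k$ of its picks avoid $\prod_{j\ne i}T_j$ with probability $\big(1-\prod_{j\ne i}|T_j|/n\big)^k$, so the whole event has probability $\big(1-\prod_{j\ne i}|T_j|/n\big)^{k|S|}$. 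Union-bounding over the at most $r\prod_{j}\binom{n}{|T_j|}$ configurations and splitting the estimate according to the scale of $|S|$ and of the complements $n-|T_j|$, one checks the sum tends to $0$ for $k$ large. The main obstacle is the small-$|S|$ regime: there $\prod_{j}\binom{n}{|T_j|}$ need not be negligible against $\big(1-\prod_{j\ne i}|T_j|/n\big)^{k|S|}$, so one must exploit the coupling forced in Step~2 --- a cover of weight $\le n$ cannot have $|S|$ small while the complements $n-|T_j|$ are large --- and calibrate the expansion parameter so that it is simultaneously strong enough to drive Step~2 and weak enough to survive this union bound. (At the opposite extreme, $|S|$ of linear size, one has the most configurations but also the sharpest per-configuration bound, exactly as in the familiar $k$-out graph calculations.)
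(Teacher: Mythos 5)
Your overall architecture is the paper's: a deterministic statement that sufficient expansion forces every minimum-weight fractional cover to be block-constant (whence $\tau^*=n$ and, by duality, a perfect fractional matching), plus a union-bound verification that $\kout{\mpart{n}{r}}{k}$ w.h.p.\ has that expansion. Step 1 is correct, and the per-configuration probability bound in Step 3 is the right computation. But there is a genuine gap at the heart of the argument: Step 2 is not a proof, and the difficulty it waves away is precisely where the $r$-partite case differs from $\complete{n}{r}$. You never state the expansion property you are using, and you cannot take it to be ``for all $S$ and all $T_j$ in the pertinent ranges'' with a single scale: as you yourself observe in Step 3, when $|S|$ is small and the complements $U_j=V_j\setminus T_j$ are of linear size, the bad event has constant probability per configuration and the union bound is hopeless --- indeed no such expansion holds w.h.p. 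Your proposed rescue, ``a cover of weight $\le n$ cannot have $|S|$ small while the $n-|T_j|$ are large,'' is asserted, not proved, and it is not clear how to choose thresholds $t_i$ with $\sum t_i\le 1$ that simultaneously (a) make the forbidden configuration $S\times T_2\times\cdots\times T_r$ nonempty for a non-block-constant cover, (b) keep the sizes in a range your union bound survives, and (c) respect the fact that the block averages of a minimum cover are unknown (the minimum covers form a simplex, not a single point as for $\complete{n}{r}$, so raw thresholds and Markov on raw weights do not localize the level sets).

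The paper closes exactly this gap with two ingredients you are missing. First, it normalizes by subtracting the block averages (``balanced assignments'' $w_f(v)=f(v)-\frac1n\sum_{u\in V_i}f(u)$), reducing the problem to showing that the only $w$ with $\sum_{V_i}w=0$ for all $i$ and $w(e)\ge 0$ for all edges is $w\equiv 0$; this removes the unknown simplex point from the picture. Second, it uses a \emph{two-tier} expansion hypothesis --- (i) sets $T$ with $|T|\le\eps n$ expand by a factor $\lambda$ against each block, and (ii) sets $T$ with $|T|\ge\eps n$ meet an edge avoiding any prescribed $(1-\eps)n$ vertices per block --- and splits the blocks into those where the below-average part is small (handled by (i) and an integral over level sets, Lemma 4.2) and those where it has linear size (handled by (ii), Lemma 4.3, which produces a nearly full positive level set in some \emph{other} block to pay for the negative mass). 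Your sketch contains no analogue of this dichotomy, and without it the ``bookkeeping'' you defer to does not go through. To repair the proposal you would need to state the two-regime expansion property explicitly (your Step 3 calculation then does verify it for large $k$) and supply the deterministic derivation, which is the substantive content of the paper's Proposition 4.1.
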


Our upper bounds on the $k$'s are quite large (roughly $r^{r}$), but in fact we don't even know that
they must be larger than $2$ (though this sounds optimistic),
and we make no attempt to optimize.
In the more interesting case of (ordinary) perfect matchings,
consideration of the expected number of perfect matchings shows that
$k$ does need to be be at least exponential in $r$.

\mn

We will make substantial use of the next observation (or, in the $r$-partite case,
of the analogous  Proposition~\ref{r-partite-expansion}, whose statement we postpone), in which the
notion of expansion may be of some interest.
Recall that an \emph{independent set} in a hypergraph is a set of vertices
containing no edges.

\begin{proposition}\label{mainLemma}
Suppose $\Aitch$ is an $r$-graph in which, for all disjoint $X, Y \subseteq V$ with
$X$ independent and
\beq{Y}
|Y| < (r-1)|X|,
\enq
there is some edge meeting $X$ but not $Y$.
Then $\Aitch$ has a perfect fractional matching.
If, moreover we replace ``$<$" by ``$\leq$" in \eqref{Y},
then
$w\equiv 1/r$ is the only fractional cover of weight $n/r$.
\end{proposition}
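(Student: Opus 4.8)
The plan is to pass to the LP dual. By LP duality the maximum weight $\sum_A w'(A)$ of a fractional matching $w'$ of $\Aitch$ equals the minimum weight $\sum_v w(v)$ of a \emph{fractional cover}, i.e.\ of a $w:V\to\mathbb{R}_{\ge 0}$ with $\sum_{v\in A}w(v)\ge 1$ for every edge $A$; and $w\equiv 1/r$ is always such a cover, of weight $n/r$. So for the first assertion it suffices to show every fractional cover has weight at least $n/r$: the optimal fractional matching $w'$ then has $\sum_A w'(A)=n/r$, so $\sum_{v}\sum_{A\ni v}w'(A)=r\sum_{A}w'(A)=n=\sum_{v}1$, and since each $\sum_{A\ni v}w'(A)\le 1$ all of these equal $1$, i.e.\ $w'$ is perfect. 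The uniqueness statement in the ``$\le$'' case will fall out of analysing equality in this lower bound.

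To bound the cover weight, fix a fractional cover $w$ and, for a threshold $s\in\mathbb{R}$, put $X_s=\{v:w(v)<s\}$. First, $X_s$ is independent for every $s\le 1/r$, since an edge inside $X_s$ would have total $w$-weight $<rs\le 1$. The crucial step is a coupling of thresholds: for $s\le 1/r$, apply the hypothesis with $X=X_s$ and $Y=\{v:w(v)>\tfrac{1-s}{r-1}\}$ (disjoint, since $s\le\tfrac{1-s}{r-1}$ exactly when $s\le 1/r$) to get $|Y|\ge(r-1)|X_s|$; otherwise there is an edge $A$ meeting $X_s$ but avoiding $Y$, and then $\sum_{v\in A}w(v)<s+(r-1)\cdot\tfrac{1-s}{r-1}=1$, contradicting that $w$ covers $A$.

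Now integrate over thresholds. Write $\sum_v w(v)=\int_0^\infty|\{v:w(v)\ge t\}|\,dt$ and split at $t=1/r$. In the part with $t\ge 1/r$, substitute $t=\tfrac{1-s}{r-1}$ to get $\int_{1/r}^\infty|\{w\ge t\}|\,dt=\tfrac{1}{r-1}\int_{-\infty}^{1/r}|\{w\ge\tfrac{1-s}{r-1}\}|\,ds$, and bound the integrand below using $|\{w\ge\cdot\}|\ge|\{w>\cdot\}|$ and the crucial step: this gives $\int_{1/r}^\infty|\{w\ge t\}|\,dt\ge\int_{-\infty}^{1/r}|\{w<s\}|\,ds=\int_0^{1/r}|\{w<s\}|\,ds$ (the lower limit may be taken to be $0$ as $w\ge 0$). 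Hence $\sum_v w(v)\ge\int_0^{1/r}(|\{w\ge t\}|+|\{w<t\}|)\,dt=n/r$.

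Finally, for the uniqueness claim: replacing ``$<$'' by ``$\le$'' in \eqref{Y} lets the same argument upgrade the crucial-step bound to $|Y|\ge(r-1)|X_s|+1$ whenever $X_s\neq\emptyset$. If $w$ is a fractional cover of weight exactly $n/r$, then the chain of inequalities above consists entirely of equalities, so $\int_{-\infty}^{1/r}(\tfrac{1}{r-1}|\{w\ge\tfrac{1-s}{r-1}\}|-|\{w<s\}|)\,ds=0$; since $w$ takes only finitely many values, the integrand is a nonnegative step function, hence vanishes off a finite set, so $|\{w>\tfrac{1-s}{r-1}\}|=(r-1)|\{w<s\}|$ for all but finitely many $s\le 1/r$. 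Were $w$ not identically $1/r$, then (as $\sum_v w(v)=n/r$) some vertex would have $w(v)<1/r$, so $X_s\neq\emptyset$ for every $s$ in an interval of positive length below $1/r$; choosing such an $s$ at which the displayed equality also holds contradicts the strict bound. Hence $w\equiv 1/r$. I expect the one genuinely delicate point to be identifying the threshold coupling $s\leftrightarrow\tfrac{1-s}{r-1}$ of the crucial step and lining it up under the change of variables; the remainder is routine.
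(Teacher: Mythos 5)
Your proof is correct and follows essentially the same route as the paper's: both reduce to showing every fractional cover has weight at least $n/r$, apply the expansion hypothesis to the level sets $\{w<s\}$ and $\{w>\tfrac{1-s}{r-1}\}$ (the paper's $W_t$, $W^{t/(r-1)}$ with $s=1/r-t$), and integrate the distribution function with the same change of variables split at $1/r$. The only cosmetic difference is that the paper threads strictness directly through the integral inequality rather than extracting uniqueness from an equality analysis afterward.
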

\noindent
It's not hard to see that for $r>2$ the proof of this can be tweaked to
give the stronger conclusion even under the weaker hypothesis.  (For $r=2$ this is
clearly false, e.g., if $G$ is a matching.)

Related notions of expansion (respectively stronger than and incomparable to ours)
appear in \cite{krivelevich} and \cite{haber}.
An additional application of Proposition~\ref{mainLemma},
given in Section~\ref{theorem Krivelevich Stopping}, is a
short alternate proof of the
following result of Krivelevich \cite{krivelevich}.
\begin{theorem}\label{theorem Krivelevich Stopping}
Let $\{\Aitch_{t}\}_{t\geq 0}$ denote the random $r$-graph process on $V$
in which each step adds an edge chosen uniformly from the current non-edges,
let $T$ denote the first $t$ for which $\Aitch_{t}$ has no isolated vertices.
Then $\Aitch_{T}$ has a perfect fractional matching w.h.p..
\end{theorem}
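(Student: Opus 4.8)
The plan is to deduce the theorem from Proposition~\ref{mainLemma}, applied not to $\Aitch_T$ directly but to a slightly trimmed copy of it.

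Write $t_\pm=\tfrac{n}{r}(\ln n\pm\omega)$ for $\omega=\omega(n)\to\infty$ slowly; routine first--moment estimates give $t_-\le T\le t_+$ w.h.p.\ (below $t_-$ there are isolated vertices, above $t_+$ there are none). I would next trim away the low-degree vertices. Let $D=D(r)$ be a large constant and $L$ the set of $v$ with $\deg_{\Aitch_T}(v)\le D$. At any time in $[t_-,t_+]$ a given vertex has degree $\le D$ with probability $\Theta((\ln n)^{D}/n)$, so w.h.p.\ $|L|=(\log n)^{O(1)}$, while by definition of $T$ each $v\in L$ has $\deg_{\Aitch_T}(v)\ge1$. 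A second, cheap first--moment computation (cheap because $|L|$ is only polylogarithmic) shows that w.h.p.\ the ``stars'' $\{A\in\Aitch_T:v\in A\}$, $v\in L$, are pairwise vertex-disjoint, so one may choose an edge $A_v\ni v$ for each $v\in L$ with the $A_v$ pairwise disjoint. Put $W=\bigcup_{v\in L}A_v$; since $|W|=(\log n)^{O(1)}$ and codegrees are w.h.p.\ tiny, w.h.p.\ every vertex of $V\setminus W$ still has degree $\ge D/2$ in $\Aitch'=\Aitch_T[V\setminus W]$. Assigning weight $1$ to each $A_v$ and adding to it a perfect fractional matching of $\Aitch'$ gives a perfect fractional matching of $\Aitch_T$, so it suffices to produce one for $\Aitch'$; for this I would check the hypothesis of Proposition~\ref{mainLemma}.

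So the heart of the matter is to show that, w.h.p., a random $r$-graph $\Aitch$ of the density here ($\approx\ln n$ edges per vertex, minimum degree $\ge D/2$) has no \emph{bad pair}: disjoint $X,Y$ with $|Y|<(r-1)|X|$ and with every edge meeting $X$ also meeting $Y$ (note this forces $X$ independent). Applied to $\Aitch'$ this is exactly what Proposition~\ref{mainLemma} needs; and $\Aitch'$ is obtained by deleting the negligible set $W$ from $\Aitch_m$ (here $m=T$, transferred by a union bound over the $O(\omega n)$ values $m\in[t_-,t_+]$ and the routine binomial-model comparison, each contributing a polynomially small probability). The mechanism is sparsity: the \emph{link} $\mathcal L_X=\{A\setminus X:A\in\Aitch,\ A\cap X\ne\emptyset\}$ is a hypergraph with edges of size $<r$ and at least $\tfrac{D}{2(r-1)}|X|$ of them, but, living on $\approx n$ vertices, it is so sparse that w.h.p.\ only a negligible fraction of its edges meet another; deleting one edge of each meeting pair leaves a matching of $\mathcal L_X$, hence a lower bound on its transversal number $\tau(\mathcal L_X)$, much larger than $(r-1)|X|$ --- which is the size a transversal $Y$ would need to make $(X,Y)$ a bad pair.

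The point I expect to be the main obstacle is making this union bound over $X$ succeed for \emph{every} size $a=|X|$ at once, since no single estimate covers the whole range. For $a$ bounded, a bad pair would force an unlikely local configuration --- some vertex sharing $\ge c(r)$ edges with a bounded set --- which has probability $n^{-\omega(1)}$ and beats the $n^{O(1)}$ choices. For $a$ in an intermediate range, the expected number of meeting pairs in $\mathcal L_X$ is $O(a^2\log^2 n/n)$, small compared to $|\mathcal L_X|$, and a concentration estimate for this quantity (a sum over vertices $w$ of $\binom{\mathrm{codeg}(w,X)}{2}$) makes the event ``$X$ lies in a bad pair'' rare enough to absorb the $\binom{n}{a}$ choices. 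Finally, for $a$ up to the largest possible size of an independent set --- which is $\Theta\!\big((\ln\ln n/\ln n)^{1/(r-1)}n\big)$, itself something to be pinned down --- the number of independent $X$ of size $a$ is at most $\exp\!\big(a[\ln(n/a)-\Theta(a^{r-1}\ln n/n^{r-1})]\big)$, and multiplying this by the crude single-pair bound $(1-p)^{(1+o(1))a\binom{n-1}{r-1}}=n^{-(1+o(1))a}$ (there being $(1+o(1))a\binom{n-1}{r-1}$ potential edges meeting $X$ and avoiding $Y$, with $p\binom{n-1}{r-1}=\ln n$) and by the $\le\binom{n}{(r-1)a}$ choices of $Y$ leaves $\exp(-a\ln n(1-o(1)))$, the saving coming from the fact that $\ln(n/a)$, $\tfrac1a\ln\binom{n}{(r-1)a}$ and $a^{r-1}\ln n/n^{r-1}$ are all $o(\ln n)$ across this range. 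Reconciling the three regimes at their boundaries, together with the bookkeeping for the random stopping time and the deletion of $W$, is the technical core of the argument.
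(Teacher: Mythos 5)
Your overall architecture is the same as the paper's: bracket the hitting time, cover the low\mbox{-}degree vertices by a vertex-disjoint family of edges given weight $1$, and apply Proposition~\ref{mainLemma} to what remains after verifying expansion by a union bound over set sizes. But two of your steps have genuine problems. The serious one is the mechanism for the intermediate regime. You propose to lower-bound $\tau(\mathcal{L}_X)$ by extracting a matching from the link, deleting one edge from each intersecting pair, and you justify this by ``a concentration estimate'' for $\sum_{w}\binom{\mathrm{codeg}(w,X)}{2}$. That concentration statement is false at the strength you need: the upper tail of this degree-$2$ polynomial at level $\Theta(a)$ (where $a=|X|$) is dominated by the event that a single vertex $w$ has $\mathrm{codeg}(w,X)=\Theta(\sqrt{a})$, which has probability only about $\exp[-O(\sqrt{a}\log n)]$ --- nowhere near small enough to survive the $\binom{n}{a}=\exp[\Theta(a\log(n/a))]$ choices of $X$ once $a$ is polylogarithmic or larger. (Indeed, taking $X$ to be a union of partial neighborhoods of $a/\log n$ vertices produces, w.h.p., sets with $\sum_w\binom{\mathrm{codeg}(w,X)}{2}\gg a$ and with no large link matching; such $X$ are not bad pairs, but they defeat your proof.) The repair is to union over $Y$ as well and apply Chernoff directly to the number of edges meeting both $X$ and $Y$; and if, instead of a constant degree threshold $D$, you trim at degree $\eps\log n$ as the paper does, the Chernoff exponent becomes $\Omega(x\log n\cdot\log\log n)$ and a \emph{single} estimate (the paper's Lemma~\ref{stoppingLemma}(d)) covers all $|X|\le n(\log n)^{-1/r}$, with the independence-number bound (Lemma~\ref{stoppingLemma}(b)) handling the top of the range --- no three-regime analysis needed. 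Your constant threshold is what forces the delicate middle regime into existence.

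Two further points. First, ``every vertex of $V\setminus W$ still has degree $\ge D/2$ in $\Aitch'$'' does not follow from $|W|=(\log n)^{O(1)}$ plus small codegrees: bounded codegrees only bound the loss at a vertex by $O(|W|)$, which is polylogarithmic, not $\le D/2$; and since $W$ is random you cannot simply union over it. The paper's condition (c) --- the explicit list of excluded local configurations (i)--(viii) --- is exactly the careful version of this step, and something of that nature is unavoidable. Second, your transfer to the stopping time ``by a union bound over the $O(\omega n)$ values $m\in[t_-,t_+]$'' fails for the events whose per-$m$ failure probability is only $\mathrm{polylog}(n)/n$, such as the pairwise disjointness of the stars of $L$; for those you must use the sandwich $\Aitch_{t_-}\subseteq\Aitch_T\subseteq\Aitch_{t_+}$ (define $L$ at time $t_-$ and its stars at time $t_+$), which is precisely what the paper's continuous coupling $G(\sigma)\subseteq G(\Lambda)\subseteq G(\beta)$ accomplishes. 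With these repairs your plan converges to the paper's proof; as written, it does not go through.
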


\mn
\textbf{Outline.}
Section~\ref{sectionPrelims} includes definitions and brief linear programming background.
Section~\ref{sectionGeneralHypergraph} treats $\complete{n}{r}$, proving
Proposition~\ref{mainLemma} and Theorem~\ref{generalHypergraph},
and the corresponding results for $\mpart{n}{r}$ are proved in
Section~\ref{sectionR-partite}.
Finally, Section~\ref{sectionStopping}
returns to $\complete{n}{r}$,
using Proposition~\ref{mainLemma}
to give an alternate proof of Theorem~\ref{theorem Krivelevich Stopping}.

\section{Preliminaries}\label{sectionPrelims}
Except where otherwise specified, $\h $ is an $r$-graph on $V=[n]$.
As usual, we use $[t]$ for $\{1, 2, \ldots , t\}$ and $\lcho{X}{t}$ for
the collection of $t$-element subsets of $X$.
Throughout we use $\log$ for $\ln$ and take asymptotics as $n\ra \infty$
(with other parameters fixed), pretending (following a common abuse) that all large numbers
are integers and assuming $n$ is large enough to support our arguments.

\mn

We need to recall a minimal amount of linear programming background (see e.g., \cite{schrijver1998}
for a more serious discussion).
For a hypergraph $\Aitch$,
a \textit{fractional (vertex) cover} is a map $w : V \to [0,1]$ such that
$\sum_{v \in e} w(v) \geq 1$ for all $e \in \Aitch$; the \textit{weight} of a cover $w$
is $|w| = \sum_{v} w(v)$; and the \emph{fractional cover number}, $\tau^*(\h)$, is the largest
such weight.
Similarly a \textit{fractional matching} of $\h$ is a
$\vp  : \h \to [0,1]$ such that $\sum_{e \ni v} \vp (e) \leq 1$ for all $v \in V$; the
weight of such a $\vp$ is defined as for fractional covers;
and the \emph{fractional matching number}, $\nu^*(\h)$, is the \emph{maximum} weight of a fractional matching.

In this context, LP-duality says that
$\nu^{\ast}(\Aitch) = \tau^{\ast}(\Aitch)$ for any hypergraph.
For $r$-graphs the common value is trivially at most $n/r$ (e.g., since $w\equiv 1/r$ is a fractional cover).
A fractional matching in an $r$-graph is \emph{perfect} if it achieves this bound; that is,
if $\sum\vp_e=n/r$
(equivalently $\sum_{e\ni v}\vp_e=1 ~\forall v$, which would be the definition of perfection
in a nonuniform $\h$).

\mn

Finally, given $\h$ we say a nonempty $X\sub V$ is
\emph{$\lambda$-expansive} if for all $Y\sub V\sm X$
of size at most $\lambda |X|$, there is some edge meeting $X$ but not $Y$.

\section{Proofs of Proposition of \ref{mainLemma} and Theorem \ref{generalHypergraph}}\label{sectionGeneralHypergraph}
\begin{proof}[Proof of Proposition~\ref{mainLemma}]
It is enough to show that if $w$ is a fractional cover with
$t_0:= 1/r-\min_v w(v)>0$, then $|w|\geq n/r$, with the inequality strict if we assume
the stronger version of \eqref{Y}.
We give the argument under this stronger assumption; for the weaker, just
replace the few strict inequalities below by nonstrict ones.
Given $w$ as above, set, for each $t> 0$,
\[
\mbox{$W_{t} = \{v \in [n] \ : \ w(v) \leq \frac{1}{r} - t\}$,
$~~~W^{t} = \{v \in [n] \ : \ w(v) \geq \frac{1}{r} + t\}$.}
\]
Since $w$ is a fractional cover,
each edge meeting $W_t$ must also meet $W^{t/(r-1)}$
(or the weight on the edge would be less than 1);
so, since $W_t$ is independent, the hypothesis of Proposition~\ref{mainLemma}
gives $|W^{t/(r-1)}| > (r-1) |W_t|$
for $t\in (0,t_0]$ (the $t$'s for which $W_t\neq \emptyset$).

\mn

For $\ft \in \mathbb{R}$, define $\fP(\ft) = |\{v \in [n] \ : \ w(v) \geq \ft\}|$.
Then
\begin{eqnarray*}
\mbox{$\displaystyle \int_{0}^{1} \fP(\ft) \dd{\ft}$}
&=&
\mbox{$\displaystyle \int_{0} ^{1} \sum_{v \in [n]} \1_{\{w(v)\geq \ft\}}\dd{\ft}$}\\
&=& \mbox{$\displaystyle \sum_{v \in [n]} \int_{0} ^{1} \1_{\{w(v)\geq \ft\}} \dd{\ft}
= \sum_{v \in [n]} w(v)  = \tau^{\ast} ( \Aitch).$}
\end{eqnarray*}
We also have $|W^t| = \fP(1/r + t)$ and
$|W_t| \geq n-\fP(1/r - t)$,
implying
\[
\fP(1/r + t/(r-1))\geq (r-1) (n-\fP(1/r -t)),
\]
with the inequality strict if $t\in (0,t_0]$.  Thus,
\begin{eqnarray*}
\tau^{\ast} (\Aitch) &=& \int_{0} ^{1} \fP(\ft) \dd{\ft}
= \int_{0} ^{1/r} \fP(\ft) \dd{\ft} + \int_{1/r} ^{1} \fP(\ft) \dd{\ft}\\
&=& \int_{0} ^{1/r} \fP(1/r - t) \dd{t} + \int_{0} ^{(r-1)^2 /r} \dfrac{\fP(1/r + t/(r-1))}{r-1} \dd{t}\\
&\geq& \int_{0} ^{1/r} \left[\fP(1/r - t) + \dfrac{\fP(1/r + t/(r-1))}{r-1}\right] \dd{t}\\
&>& \int_{0} ^{1/r} \left[\fP(1/r - t) + (r-1)\dfrac{n - \fP(1/r -t)}{r-1} \right]\dd{t} = \dfrac{n}{r}.
\end{eqnarray*}
\end{proof}

We should perhaps note that the converse of Proposition~\ref{mainLemma} is not true in general
(failing, e.g., if $r>2$ and $\h$ is itself a perfect matching). But
in the graphic case ($r=2$) the converse \emph{is} true (and trivial), and the proposition
provides an alternate proof of the following characterization, which is \cite[Thm.\ 2.2.4]{scheinerman2011} (and is also contained in \cite[Thm.\ 2.1]{alon2007independent}, e.g.).
\begin{corollary}
A graph has a perfect fractional matching iff $|N(I)| \geq |I|$ for all independent $I$.
\end{corollary}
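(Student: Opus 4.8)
The plan is to derive the corollary directly from Proposition~\ref{mainLemma} specialized to $r=2$, after checking that the hypothesis of the proposition is, in this case, equivalent to the Hall-type condition $|N(I)|\ge |I|$ for all independent $I$. First I would unwind the proposition's hypothesis when $r=2$: it asks that for all disjoint $X,Y\subseteq V$ with $X$ independent and $|Y|<(r-1)|X|=|X|$, there is an edge meeting $X$ but not $Y$. The key observation is that ``no edge meets $X$ but avoids $Y$'' says exactly $N(X)\subseteq Y$, so the contrapositive of the hypothesis reads: whenever $X$ is independent and $N(X)\subseteq Y$ for some $Y$ disjoint from $X$, we must have $|Y|\ge |X|$. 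Taking $Y=N(X)$ (which is disjoint from $X$ precisely because $X$ is independent) gives $|N(X)|\ge |X|$; conversely, if $|N(X)|\ge|X|$ for every independent $X$, then any $Y\supseteq N(X)$ disjoint from $X$ has $|Y|\ge|N(X)|\ge|X|>|Y|$, a contradiction, so the proposition's hypothesis holds. Hence, for graphs, the proposition's hypothesis is literally the condition $|N(I)|\ge|I|$ for all independent $I$, and Proposition~\ref{mainLemma} immediately yields the ``if'' direction of the corollary.

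For the ``only if'' direction, I would argue contrapositively and directly (this is the ``trivial'' converse mentioned in the text, valid only for $r=2$). Suppose some independent set $I$ has $|N(I)|<|I|$. Let $\vp$ be any fractional matching; I want to show $|\vp|<n/2$, i.e.\ $\vp$ is not perfect. Every edge incident to a vertex of $I$ has its other endpoint in $N(I)$ (using independence of $I$), so $\sum_{e\cap I\ne\emptyset}\vp(e)\le \sum_{v\in N(I)}\sum_{e\ni v}\vp(e)\le |N(I)|$. A perfect fractional matching would force $\sum_{e\ni v}\vp(e)=1$ for every $v$, and in particular $\sum_{e\cap I\ne\emptyset}\vp(e)\ge \sum_{v\in I}\sum_{e\ni v}\vp(e) - (\text{double count}) $; more cleanly, for each $v\in I$ we have $\sum_{e\ni v}\vp(e)=1$, and since $I$ is independent these edge-sets are disjoint, so $\sum_{e\cap I\ne\emptyset}\vp(e)=|I|>|N(I)|$, contradicting the bound above. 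Therefore no perfect fractional matching exists.

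The only slightly delicate point — and really the sole thing to get right — is the bookkeeping in the two directions: confirming that $X$ independent forces $X\cap N(X)=\emptyset$ so that one may legitimately take $Y=N(X)$ in the proposition, and, in the converse, confirming that independence of $I$ makes the edge-neighborhoods $\{e:e\ni v\}$ for $v\in I$ pairwise disjoint (since a single edge between two vertices of $I$ would violate independence), so that the masses add without overcounting. Neither step is a genuine obstacle; the corollary is essentially a translation exercise, and I would present it in a few lines, noting also that the strengthened ``$\le$'' form of \eqref{Y} is not needed here since we only claim existence, not uniqueness, of the fractional matching.
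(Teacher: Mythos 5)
Your proof is correct and follows the same route the paper intends: the ``if'' direction by specializing Proposition~\ref{mainLemma} to $r=2$ (where independence of $X$ makes the expansion hypothesis literally equivalent to $|N(I)|\ge|I|$ for independent $I$), and the ``only if'' direction by the trivial counting argument the paper alludes to. No gaps.
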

\nin
(where $N(I) $ is the set of vertices with at least one neighbor in $I$).

\begin{proof}[Proof of Theorem~\ref{generalHypergraph}]
Given $r$, let (without trying to optimize) $k=(2r^2)^r$ and $c=k^{-1/r}=1/(2r^2)$,
and let $\Aitch =\kout{\complete{n}{r}}{k}$.
Theorem~\ref{generalHypergraph} (with this $k$) is
an immediate consequence of Proposition~\ref{mainLemma} and the next
two routine lemmas.  (As usual $\alpha(\Aitch)$ is
the size of a largest independent set in $\h$.)

\begin{lemma}\label{outInd}
W.h.p. $\alpha(\Aitch) < cn$.
\end{lemma}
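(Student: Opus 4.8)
\textbf{Plan for proving Lemma~\ref{outInd}.} The plan is to show via a first-moment (union bound) argument that with high probability there is no independent set of size $cn$. Fix a set $S\sub[n]$ with $|S|=cn$; I would bound the probability that $S$ is independent in $\Aitch=\kout{\complete{n}{r}}{k}$. Independence of $S$ means that no chosen edge lies entirely inside $S$, i.e.\ for every $v\in S$, none of the $k$ edges in $E_v$ is contained in $S$. For a fixed $v\in S$, a uniformly random $r$-subset of $\complete{n}{r}$ containing $v$ lies inside $S$ with probability $\binom{|S|-1}{r-1}/\binom{n-1}{r-1}$, which is asymptotically $(|S|/n)^{r-1}=c^{r-1}$; so the probability that a single $E_v$ avoids being inside $S$ is at most $(1-c^{r-1}(1-o(1)))^{k}$. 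Since the $E_v$ are independent over $v$, the probability that $S$ is independent is at most $\big(1-c^{r-1}(1-o(1))\big)^{k|S|}\le \exp\!\big(-(1-o(1))\,c^{r-1}k\,cn\big)$.

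The next step is the union bound over the $\binom{n}{cn}\le (e/c)^{cn}=\exp(cn\log(e/c))$ choices of $S$. So it suffices that $c^{r-1}k\,c > c\log(e/c)$ with room to spare, i.e.\ that $c^{r-1}k > \log(e/c)+\eps$. With the chosen parameters $k=(2r^2)^r$ and $c=1/(2r^2)$ we have $c^{r-1}k = c^{r-1}\cdot c^{-r} = c^{-1} = 2r^2$, while $\log(e/c)=\log(2er^2)$ is of order $\log r$, so the inequality holds comfortably for all $r\ge 2$ (and certainly for $n$ large). Hence the union bound gives $\pr(\alpha(\Aitch)\ge cn)\le \exp(-\gO(n))\to 0$. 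One should also note that independent sets of size exactly $cn$ suffice to control all larger independent sets, since any set of size $>cn$ contains one of size $cn$; alternatively just sum the bound over all sizes $\ge cn$, which only improves things.

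The only mildly delicate point is the $(1-o(1))$ factor coming from replacing $\binom{|S|-1}{r-1}/\binom{n-1}{r-1}$ by $c^{r-1}$: for fixed $r$ and $|S|=cn$ this ratio is $c^{r-1}(1-O(1/n))$, so the loss is genuinely negligible and does not affect the exponential rate. A second minor point is the degenerate case $|\h_v|<k$, which the model allows but which cannot occur here since $|\complete{n}{r}_v|=\binom{n-1}{r-1}\gg k$ for large $n$. I expect no real obstacle: the lemma is exactly the kind of routine first-moment estimate the authors flag it as, and the whole content is checking that the constant $c^{r-1}k=2r^2$ beats the entropy term $\log(e/c)$, which it does by design of $k$ and $c$.
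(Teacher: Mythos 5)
Your proposal is correct and follows essentially the same route as the paper: a first-moment union bound over sets of size $cn$, bounding the probability that a fixed $S$ is independent by $(1-(1-o(1))c^{r-1})^{k|S|}$ and checking that $kc^r$ beats the entropy term (the paper uses the cruder $2^n$ count of sets, which still suffices since $kc^r=1>\ln 2$). The minor points you flag (sampling without replacement within each $E_v$, monotonicity of independence, and $|\h_v|\gg k$) are all handled correctly.
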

\begin{lemma}\label{outExpand}
W.h.p.\ every $X \subseteq V(\Aitch)$ with $|X| \leq c n$ is $(r-1)$-expansive.
\end{lemma}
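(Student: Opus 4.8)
The plan is a straightforward first‑moment (union bound) over ``bad'' configurations, so the real content is carrying the estimate at the right precision. Call a pair $(X,Y)$ \emph{bad} if $\emptyset\neq X\sub V$, $Y\sub V\sm X$, $|Y|=\flo{(r-1)|X|}$, and every edge of $\Aitch$ meeting $X$ also meets $Y$. Since enlarging $Y$ within $V\sm X$ (possible on the relevant range, as $(r-1)|X|<n-|X|$) cannot destroy badness, a set $X$ with $1\le|X|\le cn$ fails to be $(r-1)$-expansive precisely when it lies in some bad pair; so it suffices to show that w.h.p.\ $\Aitch$ has no bad pair with $|X|\le cn$. I will do this by bounding, for each $s$ with $1\le s\le cn$, the expected number of bad pairs with $|X|=s$ and summing over $s$.

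Fix such a pair, with $|X|=s$. If $(X,Y)$ is bad then for every $v\in X$ each of the $k$ edges making up $E_v$ contains $v\in X$, hence meets $X$, hence must meet $Y$; and since the families $\{E_v:v\in X\}$ are chosen independently, these $s$ events are independent. For a single $v\in X$, write $N=\binom{n-1}{r-1}$ for the number of $r$-sets through $v$ and $p=\binom{n-1-|Y|}{N}\big/N$ --- rather, $p=\binom{n-1-|Y|}{r-1}/N$ --- for the probability a uniformly random such set misses $Y$; then the number of $r$-sets through $v$ meeting $Y$ is $(1-p)N$, and the probability that all $k$ distinctly chosen edges of $E_v$ meet $Y$ is $\binom{(1-p)N}{k}\big/\binom{N}{k}\le(1-p)^{k}$. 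Hence $\pr((X,Y)\ \mathrm{bad})\le(1-p)^{ks}$.

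The one point that needs care --- and essentially the only place the argument is not purely mechanical --- is that $k$ is an absolute constant (it does not grow with $n$), so a bound of the shape ``$1-p\le$ constant $<1$'' would be useless against the $n^{\Theta(rs)}$ choices of $(X,Y)$; what rescues us is that $Y$ is small when $X$ is small. Indeed $|Y|\le(r-1)s$ gives $p\ge\big(1-\tfrac{|Y|}{n-r}\big)^{r-1}\ge 1-\tfrac{(r-1)|Y|}{n-r}$, so for $n$ large $1-p\le r^{2}s/n$ (in particular $1-p\le r^{2}c=\tfrac12$ for all $s\le cn$). Using $\binom{n}{s}\le(en/s)^{s}$ and $\binom{n}{(r-1)s}\le(en/s)^{(r-1)s}$, the union bound becomes
\[
\pr\big(\exists\ \text{bad }(X,Y)\ \text{with}\ |X|\le cn\big)\ \le\ \sum_{s=1}^{cn}\binom{n}{s}\binom{n}{(r-1)s}\Big(\frac{r^{2}s}{n}\Big)^{ks}\ \le\ \sum_{s=1}^{cn}\Big[e^{r}r^{2k}(s/n)^{k-r}\Big]^{s}.
\]
Since $k>r$ and $s/n\le c=1/(2r^{2})$, the bracket is at most $e^{r}r^{2k}c^{k-r}=(2e)^{r}r^{2r}2^{-k}=:\theta_{r}$, which with $k=(2r^{2})^{r}$ is far below $1$ (in fact below $1/e$).

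Finally one checks the sum tends to $0$, not merely stays bounded, by splitting at $s=\log n$: for $s\le\log n$ the bracket is at most $e^{r}r^{2k}(\log n/n)^{k-r}<1$ for large $n$, so those terms sum to at most $\log n\cdot e^{r}r^{2k}(\log n/n)^{k-r}\to0$ since $k-r\ge1$; for $\log n<s\le cn$ the $s$-th term is at most $\theta_{r}^{s}\le\theta_{r}^{\log n}=n^{\log\theta_{r}}$, so those sum to at most $cn^{\,1+\log\theta_{r}}\to0$ since $\theta_{r}<1/e$. This proves the lemma. Thus the main (indeed only) thing to get right is tracking the $s/n$ factor in $1-p$ and pairing it with the $(en/s)^{rs}$ from the binomial coefficients; the constants $c$ and $k$ have been chosen with ample room, and no attempt at optimization is made.
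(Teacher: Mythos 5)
Your proof is correct and follows essentially the same route as the paper's: a first-moment union bound over pairs $(X,Y)$, bounding the failure probability by roughly $(r^2|X|/n)^{k|X|}$ via the $k|X|$ edges chosen by vertices of $X$, and pairing this against the $(en/|X|)^{r|X|}$ count of pairs. The only differences are cosmetic (the paper counts pairs as $\binom{n}{rx}\binom{rx}{x}$ rather than $\binom{n}{s}\binom{n}{(r-1)s}$, and you spell out the final $o(1)$ verification that the paper leaves implicit).
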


\begin{proof}[Proof of Lemma~\ref{outInd}.]
The probability that $S\in \C{[n]}{s}$ is independent in $\h$ is
\[
\left[1 - \tfrac{(s-1)_{r-1}}{(n-1)_{r-1}}\right] ^{sk} < \exp\left[ - sk\left(\tfrac{s-r}{n} \right)^{r-1}\right].
\]
(where $(a)_b = a(a-1)\cdots (a-b+1)$),
and summing this over $S$ of size $cn$ bounds $\pr (\alpha \geq cn) $ by
\[
2^n \exp\left[ - cn k (c-r/n )^{r-1}\right]
= \exp\left[n \left( \ln 2 - (1-o(1)) k c^r \right) \right],
\]
which tends to 0 as desired.
\end{proof}

\begin{proof}[Proof of Lemma~\ref{outExpand}.]
For $X$, $Y$ disjoint subsets of $[n]$,
let $B(X,Y)$ be the event that $Y$ meets all edges meeting $X$.  Then,
with $x=|X|$ and $y=|Y|$,
\[
\pr (B(X,Y)) \leq \left[1-\tfrac{(n-y-1)_{r-1}}{(n-1)_{r-1}} \right]^{kx} \leq \left[1-\left(\tfrac{n-y-r}{n}\right)^{r-1} \right]^{kx} \leq \left[\tfrac{r(y+r)}{n} \right]^{kx},
\]
the last inequality following from
\beq{mx}
1-(1-x)^m \leq mx
\enq
(valid for $x\in [0,1] $ and nonnegative integer $m$).
The probability that the conclusion of the lemma fails is thus less than
\begin{eqnarray*}
\mbox{$\displaystyle \sum  \Cc{n}{rx} \Cc{rx}{x} \left[\tfrac{r(y+r)}{n} \right]^{kx}$}
&< &
\mbox{$\displaystyle \sum \left(\tfrac{ne}{rx} \right)^{rx} 2^{rx} \left[\tfrac{r(y+r)}{n} \right]^{kx}$}\\
&=& \mbox{$\displaystyle \sum\left[ (2e)^r  \left(\tfrac{rx}{n} \right)^{k-r} ((r-1)+r/x)^k \right]^{x} $}\\
&< & \mbox{$\displaystyle \sum \left[ (4er)^r  (r(2r-1)x/n)^{k-r} \right]^{x}$} ~=o(1),
\end{eqnarray*}
where the sums are over $1\leq x\leq cn$.
\end{proof}

\end{proof}

\section{Proof of Theorem~\ref{r-partiteHypergraph}}\label{sectionR-partite}
As in the proof of Theorem~\ref{generalHypergraph} we first show that the conclusions of
Theorem~\ref{r-partiteHypergraph} are implied (deterministically) by sufficiently good expansion
and then show that $\MPOut{n}{k}{r}$ w.h.p.\ expands as desired.
We take $V = V_1 \cup \cdots \cup V_r$ to be our $r$-partition
(so $|V_i|=n$\, $\forall i$) and below always assume $\h \sub \mpart{n}{r}$.
\begin{proposition}\label{r-partite-expansion}
Suppose $\eps\in (0,1/2)$ and $\lambda> 2r^2$ are fixed and $\Aitch $
satisfies: for any $i\in [r]$, $T\sub V_i$, $U_j\sub V_j$ for $j\neq i$ and $U=\cup_{j\neq i}U_j$,
there is an edge meeting $T$ but not $U$ provided either
\begin{itemize}
\item[(i)] $|T| \leq \varepsilon n$ and $|U_j| \leq \lambda |T|$ $\forall j\neq i$, or
\item[(ii)] $|T| \geq \eps n$ and $|U_j| \leq (1- \varepsilon) n$ $\forall j\neq i$.
\end{itemize}
Then $\Aitch$ admits a perfect fractional matching, and every minimum weight fractional
cover of $\Aitch$ is constant on each $V_i$.
\end{proposition}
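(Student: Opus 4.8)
The plan is to follow the proof of Proposition~\ref{mainLemma} --- a layer-cake (integration over level sets) argument --- but to replace the single reference value $1/r$ appearing there by a vector $(c_1,\ldots,c_r)$ of reference values, one per block $V_i$, which must be read off from the cover $w$ under consideration (here the extremal covers form a whole simplex rather than being unique, so there is no fixed choice). Concretely, I would prove the single assertion: \emph{every fractional cover $w$ of $\h$ with $|w|\le n$ is constant on each $V_i$} --- which then forces $|w|=n$. This gives both conclusions: a cover constant $c_i$ on $V_i$ satisfies $\sum_i c_i\ge1$ (apply the covering inequality to any edge; one exists by (ii)), hence $|w|=n\sum_i c_i\ge n$, so $\tau^\ast(\h)\ge n$; combined with $\tau^\ast(\h)\le n$ (witnessed by $w\equiv1/r$) and LP-duality ($\nu^\ast=\tau^\ast$) this gives $\tau^\ast(\h)=\nu^\ast(\h)=n$, i.e.\ $\h$ has a perfect fractional matching, while the assertion above is exactly the statement about minimum-weight covers.

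So fix a cover $w$ with $|w|\le n$. For each $i$ put $m_i=\min_{V_i}w$ and let $q_i$ be the $\lceil\eps n\rceil$-th smallest value of $w$ on $V_i$, so that fewer than $\eps n$ vertices of $V_i$ have $w(v)<q_i$ while at least $\eps n$ have $w(v)\le q_i$. Since $|w|\ge n\sum_i m_i$ we have $\sum_i m_i\le1$. The first real step is to show $\sum_i q_i\ge1$, using \emph{only} hypothesis (ii): if $\eta:=1-\sum_i q_i>0$, fix any $i$ and apply (ii) with $T=\{v\in V_i:w(v)<q_i+\eta/r\}$ (which has size $\ge\eps n$) and $U_j=\{v\in V_j:w(v)\ge q_j+\eta/r\}$ for $j\ne i$; if all $|U_j|\le(1-\eps)n$ then (ii) produces an edge of $w$-weight $<\sum_k(q_k+\eta/r)=1$, contradicting that $w$ is a cover, while if some $|U_j|>(1-\eps)n$ then $|\{v\in V_j:w(v)<q_j+\eta/r\}|<\eps n$, contradicting $|\{v\in V_j:w(v)\le q_j\}|\ge\eps n$. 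Now, using $\sum_i m_i\le1\le\sum_i q_i$, choose reals $c_i\in[m_i,q_i]$ with $\sum_i c_i=1$; then $\{v\in V_i:w(v)<c_i-t\}\subseteq\{v\in V_i:w(v)<q_i\}$ has fewer than $\eps n$ elements for every $t\ge0$, so the ``deficit'' level sets stay within the scope of hypothesis (i).

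Now the layer-cake. With $F_i(t)=|\{v\in V_i:w(v)<c_i-t\}|$ and $G_i(s)=|\{v\in V_i:w(v)>c_i+s\}|$, summing $\sum_{v\in V_i}w(v)=nc_i+\int_0^\infty G_i(s)\,ds-\int_0^{c_i}F_i(t)\,dt$ over $i$ gives $|w|=n+E-D$, where $E=\sum_i\int_0^\infty G_i$ and $D=\sum_i\int_0^{c_i}F_i$. It therefore suffices to prove $E\ge\frac{\lambda}{r(r-1)}D$, since $\lambda>2r^2>r(r-1)$ then forces, via $|w|\le n$ (i.e.\ $E\le D$), that $D=E=0$, i.e.\ $w\equiv c_i$ on each $V_i$ (and $|w|=n$). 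For the inequality, fix $i$ and $t>0$ and set $T=\{v\in V_i:w(v)<c_i-t\}$; for any edge $e$ meeting $T$ we have $\sum_{j\ne i}w(e\cap V_j)>1-(c_i-t)=\sum_{j\ne i}c_j+t$ (using $\sum_k c_k=1$), so $w(e\cap V_j)>c_j+\tfrac{t}{r-1}$ for some $j\ne i$; hence every edge meeting $T$ meets $\bigcup_{j\ne i}U_j$, where $U_j=\{v\in V_j:w(v)>c_j+\tfrac{t}{r-1}\}$, and since $|T|<\eps n$, hypothesis (i) forces $|U_j|>\lambda|T|$ for some $j\ne i$. So $\sum_jG_j(\tfrac{t}{r-1})\ge\lambda F_i(t)$ for each $i$, whence $\sum_jG_j(\tfrac{t}{r-1})\ge\lambda\max_iF_i(t)\ge\tfrac{\lambda}{r}\sum_iF_i(t)$; integrating over $t>0$ and substituting $s=\tfrac{t}{r-1}$ turns this into $(r-1)E\ge\tfrac{\lambda}{r}D$, as needed.

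The genuinely new ingredient, and the step I expect to need the most care, is the choice of the reference vector $(c_i)$: it must satisfy $\sum_i c_i=1$ \emph{exactly} (so that the arithmetic ``a deficit of $t$ at the $V_i$-vertex of an edge means a surplus totalling more than $t$ over the other vertices'' closes, as in Proposition~\ref{mainLemma}) while simultaneously keeping each deficit set $\{v\in V_i:w(v)<c_i-t\}$ of size below $\eps n$ (so hypothesis (i) applies) --- and it is exactly to guarantee the existence of such $c_i$, via $\sum_i q_i\ge1$, that hypothesis (ii) is used, on top of the Proposition~\ref{mainLemma}-style expansion furnished by (i). Once the $c_i$ are fixed, the integration is a routine variant of the computation in the proof of Proposition~\ref{mainLemma}; the bound $\lambda>2r^2$ is much more than needed here --- any $\lambda>r(r-1)$ works.
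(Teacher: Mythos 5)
Your proof is correct, and it takes a genuinely different route from the paper's, although both are level-set integration arguments modeled on Proposition~\ref{mainLemma}. The paper first reduces Proposition~\ref{r-partite-expansion} to showing that the only \emph{balanced assignment} (a $w:V\to\mathbb{R}$ summing to zero on each block and nonnegative on every edge) is $w\equiv 0$; this in effect centers each block at its average, which can make the deficit sets $V_i^{-}$ large, so the paper must split the blocks into those with $|V_i^{-}|\le\eps n$ and the rest, and invoke hypothesis (ii) \emph{inside} the main estimate (via a separate lemma) to control the large blocks. You avoid that case split by choosing the centering constants $c_i$ more carefully: between the block minimum $m_i$ and the $\lceil\eps n\rceil$-th smallest value $q_i$, subject to $\sum_i c_i=1$, and the existence of such $c_i$ is exactly what your single up-front application of (ii) (the inequality $\sum_i q_i\ge 1$) guarantees; thereafter every deficit level set automatically has size below $\eps n$ and only hypothesis (i) is needed, so the integration closes in one uniform case. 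Your version is arguably cleaner and yields a better constant: you need only $\lambda>r(r-1)$, whereas the paper's computation ends by requiring $2r^3<\lambda$ (which is in fact stronger than the stated hypothesis $\lambda>2r^2$, though harmless since the application takes $\lambda=4r^3$). What the paper's formulation buys is mainly the tidy self-contained intermediate statement about balanced assignments, at the cost of the two-case analysis that your quantile-based choice of the $c_i$ eliminates.
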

\begin{proof}
Define a \textit{balanced assignment} to be a $w : V \to \mathbb{R}$ with
$\sum_{v \in V_i} w(v) = 0$ and $w(e) \geq 0$ for all $e \in \Aitch$.

We claim that (under our hypotheses) the only balanced assignment is the trivial $w \equiv 0$.
To get Proposition \ref{r-partite-expansion} from this, let $f$ be a minimum weight fractional cover, and let $w_f (v) = f(v) - \sum_{u \in V_i} f(u) / n$, for each $i$ and $v \in V_i$.
Then $w_f$ is a balanced assignment:  $\sum_{v \in V_i} w_f (v) = 0$ is obvious and nonnegativity
holds since $f(e) \geq 1$ and, by minimality, $\sum_{v \in V} f(v) \leq n $.  Thus $w_f \equiv 0$, implying $f$ is as promised.

\mn

Suppose then that $w $ is a balanced assignment.  For $X \subseteq V$ and $t \geq 0$, set $X^t = \{v \in X : w(v) \geq t\}$, $X_t = \{v \in X : w(v) < -t\}$, $X^{+} = X^0$ and $X^{-} = X_0$, and define the \textit{value} of $X$ to be $\psi (X) = \sum_{v \in X} |w(v)|$.  Let $S=\{i\in [r]: |V_i ^{-}| \leq \varepsilon n\}$ and $B = [r] \setminus S$.

\begin{lemma}\label{small sets have no value}
If $X \subseteq V ^{-}$ and $|X| \leq \varepsilon n$, then $\psi (X) \leq r\psi (V^{+}) /\lambda$.
\end{lemma}
\begin{proof}
For any $t > 0$, note that every edge meeting $X_{t}$ meets $V^{t/(r-1)}$ since otherwise,
we could find an edge of negative weight.  So since $|X_t| \leq |X| \leq \varepsilon n$,
condition (i) implies $|V^{t/(r-1)}| \geq \lambda |X_t|$.  Thus,
\begin{eqnarray*}
\psi (V^{+}) &=& \int_{0} ^{\infty} |V^{u}| \dd{u} =
\dfrac{1}{r-1}\int_{0} ^{\infty} |V^{t/(r-1)}| \dd{t}\\
&\geq & \dfrac{\lambda}{r-1}\int_{0} ^{\infty} |X_{t}| \dd{t} = \dfrac{\lambda}{r-1} \psi (X).\qedhere
\end{eqnarray*}
\end{proof}
\begin{lemma}\label{large sets have no value}
If $|(V_i)_t| \leq \varepsilon n$, then
$\max_{j\in S}| V_j^{t/(r-1)}|\geq (1-\eps)n$.
\end{lemma}

\begin{proof}
Since any edge meeting $(V_i)_t$ meets $\cup_{j \neq i} V_{j} ^{t/(r-1)}$
and
$|V_j ^{+}| \leq (1-\varepsilon)n$ for $j \in B$, there must (see (ii)) be some
$j \in S$ with $|V_{j} ^{t/(r-1)}| \geq (1- \varepsilon)n$.
\end{proof}

We now claim $\psi (V_i) \leq 2r^2\psi (V)/\lambda$ for all $i$.
For $i \in S$, we do a little better: Lemma~\ref{small sets have no value} gives  $\psi (V_i ^{-}) \leq r\psi (V^{+})  / \lambda$, and balance (of $w$) then implies $\psi (V_i) =2\psi (V_i^{-}) \leq r\psi (V)  / \lambda$.
For $i\in B$ write $W$ for $V_i$ (just to avoid some double subscripts) and set
$T = \sup \{t \ : \ |W_t| \geq \varepsilon n\}$.  Then
\[
\psi (W^-) = \psi (W_T) + \psi (W^{-} \setminus W_T)
\leq \psi (W_T) + T|W^{-} \setminus W_T|.
\]
Since $|W_T| < \varepsilon n$, Lemma~\ref{small sets have no value} gives $\psi (W_T) \leq r\psi (V^{+})  / \lambda$.  On the other hand, $|W_t| \geq \varepsilon n$ for $t\in [0 ,T)$, with Lemma~\ref{large sets have no value}, implies that there is a $j\in S$ with $| V_j^{t/(r-1)}|\geq (1-\eps)n$ for all such $t$.  Thus
\begin{eqnarray*}\displaystyle
\mbox{$(1-\eps)T|W^{-} \setminus W_T|$} &\leq& \mbox{$(1-\eps)nT
~\leq~
\int_{0}^{T} | V_{j} ^{t/(r-1)} | \dd{t}  \leq \int_{0}^{\infty} | V_{j} ^{t/(r-1)} | \dd{t}$}
\\
&=& \mbox{$(r-1)\psi ( V_{j} ^{+}) ~ \leq ~ r^2 \psi (V^{+})/\lambda.$}
\end{eqnarray*}
So, combining, we have
$\psi (W) = 2\psi (W^{-}) \leq 2r^2\psi (V)  / \lambda $ (establishing the claim)
and
\[
\mbox{$\psi (V) = \sum_{i} \psi (V_i) \leq 2r^3 \psi (V)/\lambda.$}
\]
But since $2r^3 < \lambda$, this forces $\psi (V) = 0$ and so $w \equiv 0$.
\end{proof}

\begin{proof}[Proof of Theorem \ref{r-partiteHypergraph}]
Set $\gl = 4r^3$, $\eps = (2r\gl)^{-1}$ and $k =2r\eps^{-r}$ (so $k$ is a little more than $r^{4r}$).  We show that w.h.p.\ $\Aitch = \MPOut{n}{k}{r}$ is as in Proposition~\ref{r-partite-expansion}.  As earlier, let $B(X,Y)$ be the event that every edge meeting $X$ meets $Y$.

\mn

Suppose first that $T$ and $U$ are fixed with $|U_i| = \lambda |T| \leq \lambda \varepsilon n$.  Then
\[
\mbox{$\pr(B(T,U)) \leq \left[1 -\left(1-\frac{\lambda|T|}{n} \right)^{r-1}  \right]^{k|T|}
\leq \left( \frac{r\lambda|T|}{n} \right) ^{k|T|}.$}
\]
Summing over choices of $T$ and $U$ bounds the probability that $\Aitch$ violates the assumptions
of the proposition for some $T$ and $U$ as in (i) by
\begin{eqnarray*}
\mbox{$r \sum_{t=1} ^{\varepsilon n} \Cc{n}{t}
\Cc{n}{\lambda t} ^{r-1} \left(\frac{r \lambda t}{n} \right)^{kt}$}
&\leq &
\mbox{$r \sum_{t=1} ^{\varepsilon n} \left( \frac{en}{t}\right)^t
\left( \frac{en}{\lambda t}\right)^{\lambda t(r-1)} \left(\frac{r \lambda t}{n} \right)^{kt}$}\\
&\leq&
\mbox{$\sum_{t=1} ^{\varepsilon n} \left[ ( r \lambda t/n)^{k-r\lambda} \lambda (er)^{r\lambda} \right]^{t}
= o(1).$}
\end{eqnarray*}

\mn

Now say $T$ and $U$ are fixed with $|T| = \varepsilon n$ and $|U_i| = (1- \varepsilon)n$.  Then
\[
\pr(B(T,U)) \leq  (1 - \eps^{r-1}) ^{k|T|} \leq \exp \left[- k|T|\varepsilon^{r-1} \right] \leq \exp \left[- kn \varepsilon^{r} \right].
\]
So summing over possibilities for $(T,U)$ bounds the probability of a violation
with $T$ and $U$ as in (ii) by
\[
r 2^{nr} \exp \left[- kn \varepsilon^{r} \right] \leq \exp \left[n(r-k \varepsilon^r) \right] = o(1). \qedhere
\]
\end{proof}

\section{Proof of Theorem \ref{theorem Krivelevich Stopping}}\label{sectionStopping}

We now turn to our proof of Theorem~\ref{theorem Krivelevich Stopping}, for which
we work with the following standard device for handling the process $\{\h_t\}$.

\mn

Let $\xi_S$, $S\in \lcho{[n]}{r}$, be independent random variables, each uniform from $[0,1]$,
and for $\gl\in [0,1]$, let $G(\gl)$ be the $r$-graph on $[n]$ with edge set
$\Ee(\gl) = \{ S \ : \ \xi_{S} \leq \gl \}$.
Members of $\eee(\gl)$ will be called $\gl$-\emph{edges}.
Note that with probability one, $G(0)$ is empty, $G(1)$ is complete, and the $\xi_S$'s are distinct.

\mn

Provided the $\xi_S$'s are distinct, this defines the discrete process
$\{\h_t\}$ in the natural way, namely by adding edges $S$ in the order in which their associated
$\xi_S$'s appear in $[0,1]$.
We will work with the following quantities, where $\cc=\eps \log n$ for some small fixed
(positive) $\eps$
and $g$ is a suitably slow $\go(1)$.

\begin{itemize}
\item $\gL =\min\{\gl: \mbox{$G(\gl)$ has no isolated vertices}\}$;
\item $W_{\lambda} = \{v \in [n] \ : \ d_{G(\gl)} (v) \leq \cc \}$;
\item $\gs = \frac{\log n - g(n)}{\Cc{n-1}{r-1}}$ and $\beta = \frac{\log n + g(n)}{\Cc{n-1}{r-1}}$;
\item $N= \{v :
\exists e \in \Ee(\beta),\ v \in e,\ e \cap W_{\gs} \neq \emptyset\}$
\end{itemize}
(so $N$ is $W_{\gs}$ together with its $\Ee(\beta)$-neighbors).

\bn
\textbf{Preview.}
With the above framework, our assignment is to show that $G(\gL)$ has a perfect matching w.h.p..
Perhaps the nicest part of this---and the point of coupling the different $G(\gl)$'s---is that,
so long as $\gL\in [\gs,\gb]$, which we will show holds w.h.p.,
the desired assertion on $G(\gL)$ follows \emph{deterministically} from a few
properties ((b)-(d)) of Lemma~\ref{stoppingLemma}) involving $G(\gs)$, $G(\gb)$
\emph{or both}; so by showing that the latter properties hold w.h.p.\ we avoid the need for a union bound
to cover possibilities for $\gL$.
Production of the fractional matching is then similar to (though somewhat simpler than)
what happens in \cite{krivelevich}:
the relatively few vertices of $W_\gL$ (and some others) are covered by an (ordinary) matching,
and the hypergraph induced by what's left has the expansion needed for Proposition~\ref{mainLemma}.

\begin{lemma}\label{stoppingLemma}
With the above setup (for fixed r) and $Z= n(\log n)^{-1/r}$, w.h.p.
\begin{itemize}
\item[(a)] $\gL\in [\gs, \beta]$;
\item[(b)] $\ga(G(\gs))<Z$;
\item[(c)] no $\beta$-edge meets $W_{\gs}$ more than once
and no $u\not\in W_{\gs}$ lies in more than one $\gb$-edge meeting
$N \setminus \{u\}$;
\item[(d)] each $X\sub V\sm W_{\gs}$ of size at most Z is $r$-expansive in $ G(\gs)$.

\end{itemize}
\end{lemma}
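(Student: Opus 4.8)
The four items can be treated separately. Throughout we use that $\gs\Cc{n-1}{r-1}=\log n-g(n)$ and $\gb\Cc{n-1}{r-1}=\log n+g(n)$ are both $(1\pm o(1))\log n$ (recall $g$ is a slow $\go(1)$), so that for a fixed vertex $v$ the degrees $d_{G(\gs)}(v)$ and $d_{G(\gb)}(v)$ are binomial with parameters $\Cc{n-1}{r-1}$ and $\gs$ (resp.\ $\gb$), hence of mean $(1\pm o(1))\log n$. For \textbf{(a)}: the expected number of isolated vertices of $G(\gb)$ is $n(1-\gb)^{\Cc{n-1}{r-1}}=e^{-g(n)+o(1)}\ra0$, so w.h.p.\ there are none and $\gL\le\gb$; for $G(\gs)$ the same expectation is $e^{g(n)+o(1)}\ra\infty$, and since for distinct $u,v$ the events that $u$, resp.\ $v$, is isolated differ from independence only through the $\Cc{n-2}{r-2}=o(\Cc{n-1}{r-1})$ edges containing both, a standard second-moment argument gives $\gL\ge\gs$. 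For \textbf{(b)}: a fixed $Z$-set is independent in $G(\gs)$ with probability $(1-\gs)^{\Cc{Z}{r}}\le\exp[-(1-o(1))\gs\Cc{Z}{r}]$, and the value $Z=n(\log n)^{-1/r}$ is chosen exactly so that $\gs\Cc{Z}{r}=(1-o(1))n/r$; since $\Cc{n}{Z}=\exp[O(Z\log(n/Z))]=\exp[o(n)]$ (as $\log(n/Z)=O(\log\log n)$), the union bound over $Z$-sets is $\exp[o(n)-(1-o(1))n/r]=o(1)$.

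\textbf{(c).} First, $\pr(v\in W_\gs)=\pr(d_{G(\gs)}(v)\le\eps\log n)\le n^{-1+\gd+o(1)}$ for a constant $\gd=\gd(\eps)$ with $\gd\ra0$ as $\eps\ra0$ (a lower-tail large deviation of a binomial of mean $(1-o(1))\log n$); we fix $\eps$ small enough that $\gd<1/2$. For the first part, bound $\pr(\exists S:\xi_S\le\gb,\ |S\cap W_\gs|\ge2)$ by summing over pairs $\{u,v\}$ and edges $S\supseteq\{u,v\}$ the probability of $\{\xi_S\le\gb\}\cap\{u,v\in W_\gs\}$; deleting from the degree counts of $u$ and of $v$ the edge $S$ together with the $\Cc{n-2}{r-2}$ edges through both perturbs the relevant binomial parameters only negligibly, so the two ``$\in W_\gs$'' events may be taken independent of each other and of $\xi_S$, making each term $\le\gb\cdot n^{-2+2\gd+o(1)}$; as there are $O(n^r)$ terms and $\gb=O(\log n/n^{r-1})$, the sum is $n^{-1+2\gd+o(1)}\,\mathrm{polylog}(n)=o(1)$. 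For the second part, note that ``$w\in N$'' always carries a certificate that is either a single low-degree event $d_{G(\gs)}(w)\le\eps\log n$ (when $w\in W_\gs$) or such an event for some $z$ plus a $\gb$-edge through $w$ and $z$; so a vertex $u\notin W_\gs$ lying in two distinct $\gb$-edges each meeting $N\sm\{u\}$ produces a configuration on $O_r(1)$ vertices and $O_r(1)$ $\gb$-edges carrying at least two (possibly coinciding) low-degree certificates, each for a vertex $\neq u$ --- this last point is where $u\notin W_\gs$ is used. Union-bounding over the $O_r(1)$ shapes of such configurations --- a combinatorial factor $n^{O_r(1)}$ against a product of $\gb$-edge probabilities ($\gb$ each) and low-degree probabilities ($n^{-1+\gd+o(1)}$ each, again using the decorrelation above) --- a short case check (the extreme shape being two vertex-disjoint ``$\gb$-edge $+$ low-degree'' certificates) shows every shape contributes $o(1)$.

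\textbf{(d).} Fix $X$ with $x:=|X|\le Z$ and $X\cap W_\gs=\emptyset$ (so $d_{G(\gs)}(v)>\eps\log n$ for every $v\in X$) and $Y$ disjoint from $X$ with $|Y|\le rx$, and suppose every $\gs$-edge meeting $X$ also meets $Y$; then each such edge lies in $\mathcal C:=\{S:S\cap X\neq\emptyset\neq S\cap Y\}$, so
\[
x\eps\log n\;<\;\sum_{v\in X}d_{G(\gs)}(v)\;\le\;r\,\big|\{\gs\text{-edges meeting }X\}\big|\;\le\;r\,\big|\{\gs\text{-edges in }\mathcal C\}\big|,
\]
the first inequality being the minimum-degree hypothesis summed over $x$ vertices, the second that each edge meeting $X$ contributes at most $r$, and the third the bad hypothesis. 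But $\big|\{\gs\text{-edges in }\mathcal C\}\big|$ is $\mathrm{Bin}(|\mathcal C|,\gs)$ with $|\mathcal C|\le|X|\,|Y|\,\Cc{n-2}{r-2}\le rx^2\Cc{n-2}{r-2}$, hence of mean at most $r^2x^2\log n/n\le r^2x(\log n)^{1-1/r}$ (using $x\le Z=n(\log n)^{-1/r}$). Since the threshold $x\eps\log n/r$ exceeds this mean by a factor $\gtrsim(\log n)^{1/r}\ra\infty$, the standard binomial upper-tail bound gives $\pr(\mathrm{bad}(X,Y))\le\exp[-(1-o(1))\tfrac{\eps}{r^2}x\log n\log\log n]$. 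Because the number of pairs $(X,Y)$ with $|X|=x$ is at most $\Cc{n}{x}\Cc{n}{rx}(rx+1)=\exp[O(rx\log n)]$ and $\log\log n\ra\infty$, summing over $x\ge1$ yields $o(1)$.

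\textbf{The main obstacle} is (d). The naive estimate $\pr(\mathrm{bad}(X,Y))\le(1-\gs)^{|\{S:\,S\cap X\neq\emptyset,\ S\cap Y=\emptyset\}|}\le n^{-(1-o(1))x}$ cannot absorb the $\approx n^{rx}$ choices of $Y$; the fix above is to count instead the $\gs$-edges meeting \emph{both} $X$ and $Y$, of which there are few possible but --- forced by the minimum-degree hypothesis on $X$ --- necessarily many, so that the resulting large-deviation gap ($\log n$ versus $(\log n)^{1-1/r}$, which is exactly what the choice $Z=n(\log n)^{-1/r}$ is calibrated to produce) defeats the union bound. Part (c)(ii) is the most bookkeeping-heavy point but presents no genuine difficulty.
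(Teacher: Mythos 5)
The proposal is correct and follows essentially the same route as the paper on all four parts: first/second moment for (a), a union bound over $Z$-sets calibrated so $\gs\Cc{Z}{r}=\Theta(n)$ for (b), configuration counting with the low-degree probability $p<n^{-1+\gd}$ for (c), and, for (d), the key observation that $X\cap W_\gs=\emptyset$ forces many $\gs$-edges meeting both $X$ and $Y$, so that an upper-tail bound with deviation ratio $\Theta((\log n)^{1/r})$ beats the $\exp[O(x\log n)]$ union bound. The only difference is presentational: the student describes the case analysis for the second half of (c) abstractly rather than listing the eight explicit configurations as the paper does.
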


\begin{proof}
For (a), note that
the expected number of isolated vertices in $G(\gl)$ is $h(\gl):= n(1-\gl)^{\C{n-1}{r-1}}$.
The upper bound (i.e.\ $\gL< \gb$ w.h.p.) then follows from $h(\gb)=o(1)$, and the lower bound
is given by Chebyshev's Inequality (applied to the number of isolated vertices).

\mn

For (b), we have
\begin{eqnarray*}
\pr (\ga(G(\gb)) \geq Z) &<& \Cc{n}{Z} (1- \beta)^{\Cc{Z}{r}}
~<~ \left(en/Z\right)^Z \exp \left[-\beta\Cc{Z}{r} \right]\\
&=& \exp \left[Z \log(en/Z)-(1-o(1))(n/r)\log n (Z/n)^r\right]\\
&=& \exp \left[Z \log(en/Z)-\gO(n)\right] = o(1).
\end{eqnarray*}

\mn
The proofs of (c) and (d) are similarly routine but take a little longer.
Aiming for (c), set $p=\pr(\gz\leq \cc)$,
where $\gz$ is binomial with parameters
$\C{n-2}{r-1}$ and $\gs$.  Since $\mu:=\E \gz \sim \log n$,
a standard large deviation estimate (e.g., \cite[Thm.\ 2.1]{JLR}) gives
\[
p < \exp[-\mu \vp (-(\mu-\cc)/\mu)]  < n^{-1+\gd},
\]
where $\vp (x) = (x+1)\log (x+1)-x$ for $x\geq -1$ and
$\gd\approx \eps\log (1/\eps)$.

Failure of the first assertion in (c) implies existence of $S\in \complete{n}{r}$
and (distinct) $u,v\in S$ with $S\in G(\gb)$ and $u,v\in W_{\gs}$.
The probability that this occurs for a given $S,u,v$ is less than $\gb p^2$
(the $p^2$ bounding the probability that each of $u,v$ lies in at most $\cc$ edges not
containing the other),
so the probability that the assertion fails is less than
\[
\Cc{n}{r}r^2\gb p^2 \sim nr(\log n) p^2=o(1).
\]

If the second part of (c) fails, then we must be able to find a $u \notin W_\sigma$ as well as one of the
following configurations, in which $x,y \in W_\sigma$, $S_i \in G(\beta)$, and $a,b \in [n]$ (and
vertices and edges within a configuration are distinct):
\begin{itemize}
\item[(i)] $x, S_1, S_2$ with $x,u \in S_1\cap S_2$;
\item[(ii)] $x, y, S_1, S_2$ with $x, u \in S_1$, $y, u \in S_2$;
\item[(iii)] $x, a, S_1, S_2, S_3$ with $x, u \in S_1$, $x,a \in S_2$, $u,a \in S_3$;
\item[(iv)] $x,y, a ,S_1, S_2, S_3$ with $x,u \in S_1$, $y,a \in S_2$, $u,a \in S_3$;
\item[(v)] $x, a, S_1, S_2, S_3$ with $x,a \in S_1$, $u,a \in S_2 \cap S_3$;
\item[(vi)] $x, a,b, S_1, S_2, S_3, S_4$ with $x,a \in S_1$, $x,b \in S_2$, $u,a \in S_3$, $u,b \in S_4$;
\item[(vii)] $x,y, a,b, S_1, S_2, S_3, S_4$ with $x,a \in S_1$, $y,b \in S_2$, $u,a \in S_3$, $u,b \in S_4$;
\item[(viii)] $x,a,b, S_1, S_2, S_3$ with $x,a,b \in S_1, u,a\in S_2, u,b\in S_3$.
\end{itemize}
Thus, with $M=\C{n-2}{r-2}$, summing probabilities for these possibilities bounds the probability of violating the second part of (c) by
\begin{eqnarray*}
n^2pM^2\gb^2 + n^3 p^2 M^2 \beta^2 + n^3 p M^3 \beta^3 + n^4 p^2 M^3 \beta^3 + n^3 p M^3 \beta^3
~~~~~\\
~~~~~~~~~~~~~ ~~~~~~~~~+ n^4 p M^4 \beta^4 + n^5 p^2 M^4 \beta^4 + n^4pM^2\Cc{n-3}{r-3}\gb^3= o(1).
\end{eqnarray*}

For (d) it is enough to bound (by $o(1)$) the probability that for some (nonempty) $X\sub V$
of size $x\leq Z$ and $Y\sub V\sm X$ of size $rx$,
\beq{XY}
\mbox{\emph{there are at least $\cc x/r$ $\gs$-edges meeting both $X$ and $Y$.}}
\enq
For given $X,Y$ the expected number of such edges is less than
\[
x \cdot rx \Cc{n-2}{r-2} \gs < x r^2 \tfrac{Z \log n}{n-1}  =: bx.
\]
(The first inequality is a significant giveaway for small $x$, but we have lots of room.)
So, again using \cite[Thm.\ 2.1]{JLR}, we find that the probability of \eqref{XY}
is less than
\[
\exp[-(\cc x/r)\log (\cc/(erb)] < \exp[-\gO(\cc x \log\log n)],
\]
while the number of possibilities for $(X,Y)$ is less than
\[
\Cc{n}{x}\Cc{n}{rx} < \exp[(r+1)x\log (n/x)] = \exp[O(x\log n)],
\]
and the desired $o(1)$ bound follows.
\end{proof}

\begin{proof}[Proof of Theorem~\ref{theorem Krivelevich Stopping}.]
By Lemma~\ref{stoppingLemma} it is enough to show that if (a)-(d) of the lemma hold then
$G(\gL)$ has a perfect fractional matching; so we assume we have these conditions and proceed
(working in $G(\gL)$).

According to (c) (and the definition of $\gL$), $G(\gL)$ admits a
\emph{matching}, $M$, covering $W_{\gs}$ (each edge of which contains exactly one vertex of $W_{\gs}$).
Let $W$ be the set of vertices covered by $M$ (so $W$ consists of $W_{\gs}$ plus some
subset of $N\sm W_{\gs}$), and $H= G(\gL)-W$ (as usual meaning that the edges of $H$
are the edges of $G(\gL)$ that miss $W$).
It is enough to show that $H$ has a perfect fractional matching, which will follow from
Proposition~\ref{mainLemma} if we show
\beq{(r-1)exp}
\mbox{each independent set $X$ of $H$ is $(r-1)$-expansive.}
\enq
\begin{proof}  Since such an $X$ is also independent in $G(\gs)$, (b) gives $|X|\leq Z$, and
(d) then says $X$ is $r$-expansive in $G(\gs)$, \emph{a fortiori} in $G(\gL)$.
On the other hand, since $X\cap W_{\gs}=\0$, (c) guarantees that the $\gb$-edges (so also the $\gL$-edges)
meeting $X$ and \emph{not} contained in $V(H)$ can be covered by some $U\sub W$ of size at most $|X|$
(namely, (c) says each $x\in X$ lies in at most one such edge).
It follows that the $\gL$-edges meeting $X$ that \emph{do} belong to $H$ cannot be
covered by $(r-1)|X|$ vertices of $V(H)\sm X$.
\end{proof}
\end{proof}

\bibliography{myPublications}
\end{document}